\newcommand{\chapeau}{{\rlap{\smash{\hbox{\lower4pt\hbox{\hskip1pt$\widehat{\phantom{u}}$}}}}}\mbox{ }}
\DeclareSymbolFont{cyrletters}{OT2}{wncyr}{m}{n}
\DeclareMathSymbol{\sha}{\mathalpha}{cyrletters}{"58}
 \newtheorem{thm}{Theorem}[section]
 \newtheorem*{thm*}{Theorem}
 \newtheorem*{pb*}{\textit{Open question}}
 \newtheorem*{cor*}{\textit{Corollary}}
 \newtheorem{lem}[thm]{Lemma}
 \newtheorem{ex}[thm]{Example}
 \newtheorem{prop}[thm]{Proposition}
 \theoremstyle{definition}
 \theoremstyle{remark}
 \theoremstyle{remark}
 \newtheorem{rem}[thm]{Remark}
 \theoremstyle{remark}
 \numberwithin{equation}{subsection}
 \newcommand{\To}{\longrightarrow}
\begin{document}

\title[]
{The local-global exact sequence for Chow groups of zero-cycles}

\author{ Yongqi LIANG  }

\address{Yongqi LIANG \newline
Département de Mathématiques, \newline Bâtiment 425,\newline Université  Paris-sud 11,\newline  F-91405 Orsay,\newline
 France}

\email{yongqi.liang@math.u-psud.fr}

\thanks{\textit{Key words} : zero-cycles , local-global principle,
Brauer\textendash Manin obstruction}

\thanks{\textit{MSC 2010} : 14G25 (11G35)}

\date{\today.}



\maketitle

\begin{abstract}
A local-global sequence for Chow groups of zero-cycles involving Brauer groups
has been conjectured to be exact for all proper smooth algebraic varieties.
We apply existing methods to construct
several new families of varieties verifying the exact sequence. The examples
are explicit, they are normic bundles over the projective space.
\end{abstract}


\section{Introduction}

Let $k$ be a number field, denote by $\Omega_k$ the set of places of $k.$
Let $X$ be a proper smooth geometrically integral scheme of finite type over $k.$
We write $X_K=X\times_kK$ for any field extension $K$ of $k.$ We write simply
$X_v$ instead of $X_{k_v}$ where $k_v$ is the completion of $k$ at $v\in\Omega_k.$

We consider the local-global principle for zero-cycles on $X.$
Colliot-Th\'el\`ene extended the Brauer\textendash Manin pairing to the product of local Chow groups of
zero-cycles \cite{CT95},
$$\prod_{v\in\Omega_k}\textup{CH}_0(X_v)\times \textup{Br}(X)\to\mathbb{Q}/\mathbb{Z},$$
where $\textup{Br}(-)=\textup{H}_{\scriptsize\mbox{\'et}}^2(-,\mathbb{G}_m)$ is the Brauer group.
We defined the modified Chow group $\textup{CH}'_0(X_v)$ to be the usual Chow group for each non-archimedean
place $v;$ otherwise we set it to be the cokernel of the
norm map $$\textup{CH}'_0(X_v)=\textup{coker}[N_{\bar{k}_v|k_v}:\textup{CH}_0(X_v\times_{k_v}\bar{k}_v)\to \textup{CH}_0(X_v)].$$
In particular, it is $0$ if $v$ is a complex place. The pairing factorizes through the product
of the modified Chow groups. Let us denote $M^\chapeau=\varprojlim_nM/nM$ for any Abelian group $M.$
We get the local-global sequence
$$\textup{CH}_0(X)^\chapeau\mbox{ }\to\prod_{v\in\Omega_k}\textup{CH}'_0(X_v)^\chapeau\mbox{ }\to \textup{Hom}(\textup{Br}(X),\mathbb{Q}/\mathbb{Z}).\leqno(E)$$
Similarly, we denote $\textup{A}_0(X)=\textup{ker}[deg:\textup{CH}_0(X)\to\mathbb{Z}]$
and get a sequence
$$\textup{A}_0(X)^\chapeau\mbox{ }\to\prod_{v\in\Omega_k}\textup{A}_0(X_v)^\chapeau\mbox{ }\to \textup{Hom}(\textup{Br}(X)/\textup{Br}(k),\mathbb{Q}/\mathbb{Z}).\leqno(E_0)$$
We remark that the elements coming from $\textup{Br}(k)$ give no contribution to the pairing for zero-cycles of degree $0.$
The sequence $(E),$ \emph{a fortiori} $(E_0)$ \cite[Remarque 1.1]{Wittenberg}, is conjectured to be exact for all proper smooth
varieties \cite{CTSansuc81} \cite{KatoSaito86} \cite{CT95}.
The exactness of these sequences signifies that the failure of the local-global principle for zero-cycles
is controlled by the Brauer\textendash Manin obstruction.

We will prove the following two theorems. The first theorem is a slightly stronger form of the
main result of \cite{Liang4}. And the second theorem is proved by the restriction-corestriction argument.

\begin{thm*}[Theorem \ref{stronger version}]
Let $X$ be a rationally connected variety defined over $k$ and $L$ be a finite extension of $k.$
Suppose that, for all finite extensions $K$ of $k$ that are linearly disjoint from $L,$
the Brauer\textendash Manin obstruction is the only obstruction to weak approximation for rational points on $X_K.$

Then the sequence $(E)$ is exact for $X.$
\end{thm*}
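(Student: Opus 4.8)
The plan is to reproduce the argument of \cite{Liang4} establishing the exactness of $(E),$ keeping track of the finite extensions of $k$ that enter it, and then to check that every one of them may be chosen linearly disjoint from $L;$ the statement follows from that argument with no further change.

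First I would recall the formal reduction carried out in \cite{Liang4}. Since $X$ is rationally connected, $\textup{Br}(X)/\textup{Br}_0(X)$ is finite (here $\textup{Br}_0(X)$ denotes the image of $\textup{Br}(k)$), and, by known results on zero-cycles of rationally connected varieties over local fields, $\textup{CH}_0(X_v)=\mathbb{Z}$ for all $v$ outside a finite set $S_0\subseteq\Omega_k.$ Combined with a routine inverse-limit manipulation, these reduce the exactness of $(E)$ to the following concrete statement: for every finite set $S\supseteq S_0,$ every integer $n\ge1,$ and every family $(z_v)_{v\in S}$ of local zero-cycles of a common degree $\delta$ orthogonal to $\textup{Br}(X),$ there is a global zero-cycle $z$ of degree $\delta$ with $z=z_v$ in $\textup{CH}_0(X_v)/n$ for all $v\in S.$ No field extension is involved here, so this step is used verbatim.

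The heart of \cite{Liang4} is a d\'evissage producing such a $z:$ one constructs a closed point $\theta$ of $X$ of degree $\delta$ whose residue field $K=k(\theta)$ realises the prescribed local behaviour at the places of $S;$ a corestriction device transfers the Brauer orthogonality to $X_K;$ the hypothesis applied to $X_K$ then yields a $K$-rational point of $X_K,$ which, taken close to the relevant local points by the weak-approximation part of the hypothesis and regarded as a closed point of $X,$ provides the required global zero-cycle $z.$ I would run this argument unchanged, save for one extra demand: $\theta$ --- and, more generally, every auxiliary extension to which the hypothesis is applied in \cite{Liang4} --- is required to have residue field linearly disjoint from $L.$ This costs nothing. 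The point $\theta$ is produced, as in \cite{Liang4}, with enough latitude to prescribe the behaviour of $K=k(\theta)$ at finitely many places; so, having fixed an auxiliary place $w_1\notin S$ that splits completely in $L$ (there is a set of such $w_1$ of positive density), one requires in addition that $w_1$ be inert in $K$ --- an open $w_1$-adic condition on $\theta,$ compatible with the conditions at the places of $S$ by weak approximation and not interfering with the Chebotarev choices internal to \cite{Liang4}. If $w_1$ splits completely in $L$ and $K\otimes_k k_{w_1}$ is a field of degree $[K:k],$ then $K\otimes_k L$ is a field after base change to $L_{w_1}=k_{w_1},$ hence $K\otimes_k L$ is itself a field, i.e. $K$ is linearly disjoint from $L;$ moreover linear disjointness from $L$ passes to subfields, so any intermediate extension used in the d\'evissage is covered automatically. (When $\delta=1$ there is nothing to prove, as $k$ is linearly disjoint from $L.$) Thus the weaker hypothesis of the theorem applies wherever \cite{Liang4} invokes the hypothesis over $k,$ and the proof concludes exactly as there.

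I do not expect a serious obstacle; the adverb \emph{slightly} in the introduction is justified. What must be checked with care is only that the argument of \cite{Liang4} uses its auxiliary extensions $K$ solely in ways that survive the extra constraint ``$w_1$ inert in $K$'' --- in particular that this is compatible with whatever local behaviour of $K$ that argument already imposes, with the treatment of the archimedean places and of the norm-map modification defining $\textup{CH}'_0(X_v),$ and with any iteration present in the construction of $z.$ Granting this re-reading, the proof of \cite{Liang4} goes through with its hypothesis weakened to extensions linearly disjoint from $L,$ and yields the exactness of $(E)$ for $X.$
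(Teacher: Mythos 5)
Your proposal is correct and takes essentially the same approach as the paper: both simply rerun the argument of \cite{Liang4}, observing that its hypothesis is only ever invoked on residue fields $k(\theta)$ which the construction has enough freedom to make linearly disjoint from the fixed extension $L.$ The only difference is the implementation: the paper exploits that \cite{Liang4} (Thm.~3.2.1 and Prop.~3.4.1) already forces $k(\theta)$ to be linearly disjoint from an auxiliary field $k'$ and just replaces $k'$ by the compositum $Lk',$ whereas you graft on a new local condition (a place $w_1$ split in $L$ and inert in $k(\theta)$) --- which does yield the same disjointness, but requires the compatibility check you defer and is made unnecessary by the mechanism already present in \cite{Liang4}.
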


\begin{thm*}[Theorem \ref{main}]
Let $L/k$ be a finitely generated field extension, in which $k$ is algebraically closed.
Let $\{L_i\}$ be a finite family of finite extensions of $L$ such that the degrees $[L_i:L]$ have no comme
factor.

If for each $i$ the sequence $(E)$ is exact for a proper smooth variety with function field $L_i,$
then $(E)$ is also exact for proper smooth varieties with function field $L.$
\end{thm*}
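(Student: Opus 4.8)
The plan is to run a restriction\textendash corestriction argument, comparing the sequence $(E)$ for $X$ with the sequences $(E)$ for the fields $L_i$ via pull-back and push-forward of zero-cycles and of Brauer classes along generically finite morphisms onto $X$, and then using that $\gcd_i[L_i:L]=1$. First I would reduce to one model: the groups and pairings in $(E)$ are birational invariants of proper smooth varieties over a fixed base field, so exactness of $(E)$ depends only on the birational class, and it is enough to treat one proper smooth model $X$ of $L$ over $k$ while choosing convenient proper smooth models $X_i$ of the $L_i$. Let $k_i$ be the algebraic closure of $k$ in $L_i$ (a number field, as $L_i/k$ is finitely generated), so $X_i$ is geometrically integral over $k_i$; after a blow-up I may assume there is a dominant generically finite morphism of $k$-schemes $f_i\colon X_i\to X$ of degree $d_i:=[L_i:L]$. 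Viewing $X_i$ as a $k$-scheme, for a place $v$ of $k$ one has $X_i\times_k k_v=\coprod_{w\mid v}(X_i)_w$ with $\sum_{w\mid v}\deg\bigl((X_i)_w\to X_v\bigr)=d_i$, and the local factor of $(E)$ for $X_i$ rewrites as $\prod_{w\in\Omega_{k_i}}\textup{CH}'_0((X_i)_w)^\chapeau=\prod_{v\in\Omega_k}\prod_{w\mid v}\textup{CH}'_0((X_i)_w)^\chapeau$.

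Next I would put in place the restriction\textendash corestriction maps. Since $X,X_i$ are smooth, $f_i$ is a local complete intersection morphism, so it has a Gysin pull-back on Chow groups; together with the proper push-forward one obtains $f_i^{*}\colon\textup{CH}_0(X)^\chapeau\to\textup{CH}_0(X_i)^\chapeau$ and $f_{i*}\colon\textup{CH}_0(X_i)^\chapeau\to\textup{CH}_0(X)^\chapeau$ with $f_{i*}f_i^{*}=d_i$ (as $f_{i*}[X_i]=d_i[X]$, the projection formula gives $f_{i*}f_i^{*}\alpha=d_i\alpha$), and likewise place by place on the modified Chow groups $\textup{CH}'_0$ with $\sum_{w\mid v}f_{w*}f_w^{*}=d_i$ \textemdash\ everything commuting with the localization homomorphisms, surviving $(-)^\chapeau$, and respecting the norm cokernels at the archimedean places (the maps being induced by $k$-morphisms, and $\textup{CH}'_0$ vanishing at complex places). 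On Brauer groups one has $f_i^{*}\colon\textup{Br}(X)\to\textup{Br}(X_i)$, while the Galois corestriction $\textup{cores}_{L_i/L}\colon\textup{Br}(L_i)\to\textup{Br}(L)$ carries $\textup{Br}(X_i)$ into $\textup{Br}(X)$: a smooth variety's Brauer group equals its unramified subgroup, a normal variety is regular in codimension one (so resolution leaves codimension-one points unchanged), and corestriction is compatible with residue maps; the same holds over each $k_v$. Finally the Brauer\textendash Manin pairing satisfies the projection formula
$$\sum_{w\mid v}\langle f_w^{*}z_v,\gamma\rangle_{w}=\langle z_v,\textup{cores}_{L_i/L}\gamma\rangle_{v}\qquad\text{for }z_v\in\textup{CH}'_0(X_v),\ \gamma\in\textup{Br}(X_i).$$

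The argument is then a short diagram chase. Let $z=(z_v)_v\in\prod_{v\in\Omega_k}\textup{CH}'_0(X_v)^\chapeau$ be in the kernel of the second map of $(E)$ for $X$; the reverse inclusion being elementary (it encodes global reciprocity for the Brauer\textendash Manin pairing), it suffices to lift $z$ to $\textup{CH}_0(X)^\chapeau$. Fix $i$ and set $y:=(f_w^{*}z_v)_w$. For each $\gamma\in\textup{Br}(X_i)$ the projection formula gives $\sum_w\langle y_w,\gamma\rangle_w=\sum_v\langle z_v,\textup{cores}_{L_i/L}\gamma\rangle_v=0$, since $\textup{cores}_{L_i/L}\gamma\in\textup{Br}(X)$ and $z$ is orthogonal to $\textup{Br}(X)$; hence $y$ lies in the kernel of the second map of $(E)$ for $X_i$, which by hypothesis is the image of $\textup{CH}_0(X_i)^\chapeau$, so $y$ is the image of some $\xi_i$, and $f_{i*}\xi_i\in\textup{CH}_0(X)^\chapeau$ then maps to $f_{i*}y=f_{i*}f_i^{*}z=d_i\,z$. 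Doing this for each $i$ and picking $m_i\in\Z$ with $\sum_im_id_i=1$, the cycle $\sum_im_i\,f_{i*}\xi_i\in\textup{CH}_0(X)^\chapeau$ maps to $\sum_im_id_i\,z=z$. So the kernel of the second map of $(E)$ for $X$ lies in the image of $\textup{CH}_0(X)^\chapeau$, i.e.\ $(E)$ is exact for $X$, hence for every proper smooth variety with function field $L$.

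The arithmetic above is immediate; the real work \textemdash\ and where I expect the main obstacle \textemdash\ is assembling the restriction\textendash corestriction formalism at the level of $(E)$: above all checking that the Brauer corestriction sends $\textup{Br}(X_i)$ into $\textup{Br}(X)$ (globally and over each $k_v$) and proving the projection formula for the Brauer\textendash Manin pairing when $f_i$ is merely generically finite, together with the compatibilities with the archimedean norm cokernels and with the functor $(-)^\chapeau$.
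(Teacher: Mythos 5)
Your proposal is correct and follows essentially the same route as the paper: the paper's Proposition~\ref{res-cores} is exactly your restriction\textendash corestriction compatibility for the whole sequence $(E)$ (it factors the comparison through the constant-field extension $X_l\to X$, $l$ the algebraic closure of $k$ in $L_i$, and settles the archimedean norm-cokernel issue you flag by noting that $\textup{CH}'_0(X_v)$ is $2$-torsion so that multiplication by the number of real places above $v$ agrees with multiplication by the degree), after which Theorem~\ref{main} is the same B\'ezout diagram chase you perform. The only differences are organizational: you work directly with the composite $k$-morphism $X_i\to X$ via lci Gysin pull-backs instead of splitting off the constant field extension, which is a harmless variant of the same argument.
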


Finally, we deduce from these theorems the exactness of $(E)$ for several families of
normic bundles over the projective space. The arithmetic of rational points on such bundles has been widely
studied by many authors, \emph{cf.} \cite[Introduction]{B-HB}. In this paper, we obtain results for
zero-cycles on more general normic bundles.

\section{Exactness of the local-global sequence}\label{the sequence}

\subsection{A stronger form of a recent result}\ \\
Concerning a family of varieties studied recently by Derenthal\textendash Smeets\textendash Wei \cite{D-S-W},
we need the following stronger form of the main result of \cite{Liang4} in order to deduce
the exactness of the local-global sequence $(E)$ for such varieties.

\begin{thm}\label{stronger version}
Let $X$ be a proper smooth geometrically integral variety defined over a number field
$k,$ and suppose that $X$ is geometrically rationally connected. Fix a finite extension $L$ of $k.$

Assume that for all finite extensions $K/k$ that are linearly disjoint from $L,$
the Brauer\textendash Manin obstruction is the only obstruction to weak approximation
for $K$-rational points on $X_K$ (\emph{cf.} \cite[\S 5]{Skbook} for definition).

Then the sequence $(E)$ is exact for $X.$
\end{thm}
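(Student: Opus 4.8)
The plan is to reduce the exactness of $(E)$ for $X$ to the main theorem of \cite{Liang4}, whose hypothesis requires the Brauer\textendash Manin obstruction to be the only one to weak approximation for rational points not on all finite extensions $K/k$, but only on those in a restricted family, and then to show that the family of extensions linearly disjoint from $L$ is already large enough to run that argument. The key point distinguishing this statement from \cite{Liang4} is the "linearly disjoint from $L$" restriction: the original result presumably asks the approximation property over \emph{every} finite $K/k$, and here we only get it over the cofinal-in-a-weak-sense family of $K$ linearly disjoint from $L$. So the heart of the matter is to check that every step of the proof in \cite{Liang4} that invokes the approximation hypothesis over some auxiliary extension can be arranged so that the auxiliary extension is linearly disjoint from $L$.

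First I would recall the structure of the proof of exactness of $(E)$ via the fibration/approximation method: one takes a family of closed points $P_v \in X_v$ (for $v$ in a finite set $S$, trivial outside) whose degrees and the zero-cycles they represent are orthogonal to $\mathrm{Br}(X)$, and one must produce a global zero-cycle of the same degree approximating the $P_v$ in the relevant sense, after passing to the limit over $n$ in $\mathrm{CH}_0^{\wedge}$. The standard device is to replace each local point $P_v$ by a local point of degree $1$ after base change to a well-chosen finite extension, or more precisely to work with an auxiliary finite extension $k'/k$ over which $X$ acquires enough rational points; one then applies the rational-point approximation hypothesis over $k'$ and descends by a restriction-corestriction / norm argument using that one can choose $k'$ of degree prime to a prescribed integer. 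The freedom in choosing $k'$ is what I would exploit: since $L/k$ is a fixed finite extension, I can always enlarge or perturb $k'$ (e.g. by adjoining a further auxiliary element, or choosing the splitting field of a suitably generic polynomial) so that $k'$ becomes linearly disjoint from $L$ while retaining all the degree and splitting properties needed, because linear disjointness from a \emph{fixed} finite extension is a condition that can be met by infinitely many extensions of any prescribed degree and is stable under the constructions used.

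Concretely, the steps would be: (1) set up the diagram chase reducing exactness of $(E)$ to the existence, for each $n$, of a global zero-cycle approximating a given orthogonal local family, exactly as in \cite{Liang4}; (2) isolate in that proof each appeal to "weak approximation modulo Brauer\textendash Manin for rational points over an extension $K/k$", and record the constraints that $K$ must satisfy (a degree condition, a condition that $X_K$ have a rational point near the prescribed local points, possibly that $K$ split certain residue fields); (3) show that these constraints, together with "linearly disjoint from $L$", can be simultaneously satisfied, using a Krasner-type or Chebotarev-type argument to choose $K$; (4) run the original argument verbatim with this choice of $K$ and conclude. The main obstacle I expect is step (3) in the case of archimedean places and in controlling the interaction between the degree-prime-to-$n$ requirement and linear disjointness from $L$: one must ensure that the local conditions imposed at places dividing the discriminant of $L/k$ do not force $K$ to share a subfield with $L$. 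I would handle this by choosing $K$ as the residue field of a closed point on an auxiliary curve (or on $X$ itself) chosen via a movement/Bertini argument to avoid the finitely many "bad" completions, which is the same mechanism already present in \cite{Liang4} and only needs to be supplemented with the observation that avoiding linear dependence with $L$ is one more open condition of the same type.
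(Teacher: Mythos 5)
Your proposal is essentially the paper's own argument: the paper also just re-runs the proof of \cite{Liang4}, noting that the approximation hypothesis is only ever invoked over residue fields $k(\theta)$ chosen linearly disjoint from an auxiliary finite extension $k'$, and that replacing $k'$ everywhere by the compositum $Lk'$ (in the proofs of Thm.~3.2.1 and Prop.~3.4.1 of \cite{Liang4}) forces those $k(\theta)$ to be linearly disjoint from $L$, so the weakened hypothesis suffices. Your step (3) is exactly this observation, so no substantive difference.
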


\begin{proof}
Comparing to the main result Theorems A and B of \cite{Liang4}, the assumption here is slightly weaker.
We consider only finite extensions $K/k$ which are linearly disjoint from a fixed extension $L/k$
instead of considering all finite extensions of $k.$
We are going to explain how the whole proof works with minor modifications under this weaker assumption.
In fact, only two subtitle places need to be modified slightly. First, in the proof
\cite[Thm. 3.2.1]{Liang4} we need to replace everywhere $k'$ by
its compositum $k'_{\scriptsize{\mbox{new}}}=Lk'$ with $L.$ Once the residual field $k(\theta)$ is linearly disjoint from
$k'_{\scriptsize{\mbox{new}}}$ it is \emph{a fortiori} linearly disjoint from $L$ and from $k',$
the weaker assumption applies on $\pi^{-1}(\theta)\simeq X_{k(\theta)}$ for those $\theta$ such that $k(\theta)$
is linearly disjoint from $k'_{\scriptsize{\mbox{new}}},$
thus the proof still works.
Second, in the proof of \cite[Prop. 3.4.1]{Liang4}, we need to make the same replacement.
\end{proof}

\subsection{The restriction-corestriction argument and the exactness of $(E)$}\label{general things}\ \\
Above all we remark that the exactness of $(E)$ is a birationally invariant property for proper smooth
geometrically integral varieties $X$ defined over a number field $k.$
In fact, firstly the Brauer group $\textup{Br}(X)$ is identified with the unramified Brauer group
$\textup{Br}_{\scriptsize\mbox{nr}}(k(X)/k)$ which depends only on the function field $k(X).$
Secondly, the Chow group of zero-cycles $\textup{CH}_0(X)$ is also a birational invariant \cite[Prop. 6.3]{CT-Coray},
and so are the modified Chow groups $\textup{CH}'_0(X_v).$

Hereafter, we denote by $X$ a proper smooth geometrically integral variety defined over a number field $k.$
Then $k$ is algebraically closed in the function field $k(X)$ of $X.$
Consider a finite extension $L$ of $k(X),$ let us denote by $l$ the algebraic closure of $k$ in $L.$
Then there exists a dense open $U$ of $X$ and a integral $k$-variety $V$ with function field $k(V)=L$
satisfying the following commutative diagram
\SelectTips{eu}{12}$$\xymatrix@C=10pt @R=7pt{
V\ar[rr]\ar[ddr]&&U\\
&&\\
&U_l\ar[ruu]&
}$$
where $V\to U$ is a dominant $k$-morphism inducing the extension $L/k(X)$ and
$V\to U_l$ is an $l$-morphism. Thanks to Nagata's compactification, there exists
a proper $l$-morphism $Y\to X_l$ extending $V\to U_l\subset X_l$
in such a way that $V$ is identified with an dense open of an integral $l$-variety $Y.$
Moreover we may assume that $Y$ is regular by Hironaka's resolution of singularities.
The variety $Y$ is geometrically integral over $l$ since $l$ is algebraically closed in $L$ by construction.
For each finite extension $L$ of $k(X)$ we fix such an $l$-variety $Y$ and denote it by $^LY.$
We will consider the exactness of the sequence $(E)$ for the proper smooth geometrically integral $l$-variety
$^LY,$ and it depends only on the function field $L.$

\begin{prop}\label{res-cores}
There exist restriction and corestriction homomorphisms such that the following diagram commutes.
Moreover, the composition of the vertical arrows are identified with the multiplication
by $d=[L:k(X)].$
\SelectTips{eu}{12}$$\xymatrix@C=20pt @R=14pt{
\textup{CH}_0(X)^\chapeau\mbox{ }\ar[r]\ar[d]^{\textup{res}_{\tiny\mbox{\textup{gl}}}}&\prod_{v\in\Omega_k}\textup{CH}'_0(X_v)^\chapeau\mbox{ }\ar[r]\ar[d]^{\textup{res}_{\tiny\mbox{\textup{loc}}}}&\textup{Hom}(\textup{Br}(X),\mathbb{Q}/\mathbb{Z})\ar[d]^{\textup{res}_{\tiny\mbox{\textup{Br}}}}\\
\textup{CH}_0(^LY)^\chapeau\mbox{ }\ar[r]\ar[d]^{\textup{cores}_{\tiny\mbox{\textup{gl}}}}&\prod_{w\in\Omega_l}\textup{CH}'_0(^LY_w)^\chapeau\mbox{ }\ar[r]\ar[d]^{\textup{cores}_{\tiny\mbox{\textup{loc}}}}&\textup{Hom}(\textup{Br}(^LY),\mathbb{Q}/\mathbb{Z})\ar[d]^{\textup{cores}_{\tiny\mbox{\textup{Br}}}}\\
\textup{CH}_0(X)^\chapeau\mbox{ }\ar[r]&\prod_{v\in\Omega_k}\textup{CH}'_0(X_v)^\chapeau\mbox{ }\ar[r]&\textup{Hom}(\textup{Br}(X),\mathbb{Q}/\mathbb{Z})
}$$
Note that the rows are the sequences $(E)$ for $X$ and $^LY.$
\end{prop}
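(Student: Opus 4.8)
The plan is to build the vertical maps out of the standard functoriality of each of the three objects under a proper morphism of varieties (pushforward/flat pullback on Chow groups, restriction/corestriction on Galois cohomology of $\mathbb{G}_m$), and then to check that the two squares commute and that the vertical composites are multiplication by $d=[L:k(X)]$. Recall that $^LY\to X$ is obtained by composing $^LY\to X_l$ with the structure morphism $X_l=X\times_k l\to X$; the first is a proper $l$-morphism birational to $V\to U_l$, the second is finite of degree $[l:k]$. Since $^LY\to X$ is generically finite of degree $d=[l:k]\cdot[L:l]=[L:k(X)]$ (as $k(V)=L$ over $k(X)$), and $^LY$, $X$ are both regular, we have a well-defined flat/lci pullback $f^\ast$ on $\mathrm{CH}_0$ in the wrong direction only after shrinking; the clean way is to use $\mathrm{CH}_0$ covariantly for $f_\ast$ and contravariantly for $f^\ast$ via the refined Gysin/transfer for $0$-cycles available because $f$ is dominant between smooth varieties of the same dimension (equivalently, via $\mathrm{CH}_0 = \mathrm{H}^{2\dim}_{\mathrm{et}}$ and the cycle class transfer). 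Concretely: $\mathrm{res}_{\mathrm{gl}}=f^\ast$, $\mathrm{cores}_{\mathrm{gl}}=f_\ast$, and the projection formula $f_\ast f^\ast = d\cdot\mathrm{id}$ on $\mathrm{CH}_0(X)$ gives the composite. Completing $n$-adically is exact enough on these finitely generated (in the relevant sense) groups, or one simply applies $(-)^{\widehat{\ }}=\varprojlim(-)/n$ functorially, so the same maps descend to the ${}^\chapeau$-completions.

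For the local factors, for each place $v$ of $k$ and each place $w$ of $l$ above it one has $X_v\times_{k_v}l_w$, and $^LY_w\to X_l\times_l l_w=X_v\times_{k_v}l_w\to X_v$; summing the pushforwards over $w\mid v$ gives $\mathrm{cores}_{\mathrm{loc}}$, and pulling back gives $\mathrm{res}_{\mathrm{loc}}$, with $\sum_{w\mid v}[l_w:k_v]=[l:k]$ and the analogous count over $L_w\mid l_w$ yielding the factor $d$ at each place. One must check these respect the modified Chow groups $\mathrm{CH}'_0$ at archimedean places: the norm map from $\bar k_v$ is compatible with $f_\ast$, $f^\ast$, so the cokernels receive induced maps; at complex places everything is $0$. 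For the Brauer factor, $\mathrm{Br}(X)=\mathrm{Br}_{\mathrm{nr}}(k(X)/k)\hookrightarrow\mathrm{Br}(k(X))$ and likewise for $^LY$, so $\mathrm{res}_{\mathrm{Br}}$ is the usual restriction $\mathrm{Br}(k(X))\to\mathrm{Br}(L)$ (it lands in the unramified part because $^LY$ is smooth and proper over $l$), and $\mathrm{cores}_{\mathrm{Br}}$ is corestriction along $L/k(X)$; the composite is multiplication by $[L:k(X)]=d$, and dualizing into $\mathrm{Hom}(-,\mathbb{Q}/\mathbb{Z})$ swaps the two but keeps the composite equal to $d$.

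The commutativity of the two squares then reduces to two compatibility statements that are essentially standard but need to be invoked with care: (i) the Brauer–Manin pairing $\mathrm{CH}'_0(X_v)\times\mathrm{Br}(X)\to\mathbb{Q}/\mathbb{Z}$ is compatible with pushforward of cycles and restriction of Brauer classes, i.e. $\langle f_\ast z,\beta\rangle_{X}=\langle z, f^\ast\beta\rangle_{{}^LY}$, which is a formal consequence of the construction of the pairing via local class field theory and the projection formula for the étale cohomological pairing; and (ii) the "evaluation" map $\mathrm{CH}_0(X)\to\mathrm{Hom}(\mathrm{Br}(X),\mathbb{Q}/\mathbb{Z})$ appearing as the composite in $(E)$ (the global reciprocity) is likewise compatible with $f_\ast$ on the left and $(f^\ast)^\vee$ on the right, which follows from (i) summed over all places together with the compatibility of $f$ with the fundamental exact sequence of class field theory. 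I expect the main obstacle to be precisely this bookkeeping of adjunctions: one has $f^\ast$ on Chow upstairs paired against $f_\ast$-type operation on $\mathrm{Hom}(\mathrm{Br},\mathbb{Q}/\mathbb{Z})$, so the vertical arrows in the first and third columns go "the same way" as maps of groups but are adjoint with respect to the pairing, and keeping the direction of restriction versus corestriction consistent across all three columns — while checking that each square genuinely commutes rather than commuting up to the swap — is where the argument must be written out most carefully. Everything else (existence of the maps, the degree computation via the projection formula, behavior at archimedean places) is routine given the birational-invariance remarks already established and the standard functoriality of $\mathrm{CH}_0$, $\mathrm{Br}$, and the Colliot-Thélène pairing.
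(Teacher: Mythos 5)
There is a genuine gap, and it sits exactly at the one non-formal point of the proposition: the archimedean places for the step corresponding to $l(X)/k(X)$. You justify the vertical composite being multiplication by $d$ at each place by the local degree count $\sum_{w\mid v}[l_w:k_v]=[l:k]$, but on the \emph{modified} Chow groups this count is not valid. If $v$ is a real place of $k$ with $r$ real and $s$ complex places of $l$ above it ($r+2s=[l:k]$), then every component of $\textup{res}'$ aimed at a complex place is forced to be zero, since $\textup{CH}'_0$ vanishes there; consequently the composite $\textup{cores}'_v\circ\textup{res}'_v$ on $\textup{CH}'_0(X_v)$ that your construction produces is a priori multiplication by $r$, not by $d$. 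Your only remark about archimedean places is that the norm maps are compatible with $f_*,f^*$ so the cokernels receive induced maps and "at complex places everything is $0$" --- that establishes existence of the maps, but it does not address why multiplication by $r$ may be replaced by multiplication by $d$, which is precisely what the proposition asserts and what Theorem \ref{main} needs.

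The paper closes this point by an argument you never make: $\textup{CH}'_0(X_v)\simeq(\mathbb{Z}/2\mathbb{Z})^m$ at a real place (Colliot-Th\'el\`ene--Ischebeck; in fact $2$-torsion of the cokernel already follows from $N\circ\textup{res}=2$), and $r\equiv d\pmod 2$, so multiplication by $r$ equals multiplication by $d$ on this group. Alternatively, within your own framework the conclusion can be rescued purely formally: once both squares over the quotient maps $\textup{CH}_0\twoheadrightarrow\textup{CH}'_0$ commute (which needs the observation, implicit in the paper's norm-compatibility diagram, that pushforward from a complex fibre $X_v\times_{k_v}\bar k_v$ is exactly the norm map and hence dies in the cokernel), surjectivity of the quotients transports the identity $\textup{cores}_v\circ\textup{res}_v=d$ from $\textup{CH}_0(X_v)$ to $\textup{CH}'_0(X_v)$. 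Either repair is short, but you state neither, and the reason you do give for the factor $d$ is false at precisely these places. Your other concern --- the bookkeeping of which dual of restriction/corestriction appears in the $\textup{Hom}(\textup{Br}(-),\mathbb{Q}/\mathbb{Z})$ column and the projection-formula compatibility of the pairing --- is indeed needed but is the formal part, treated equally briefly in the paper; note also that the paper factors through $X_l$ so that the generically finite step is handled over a single base field and the delicate place-splitting analysis is isolated in the finite flat step $X_l\to X$, which keeps the archimedean discussion clean.
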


We postpone the proof of this proposition and present immediately its consequence.

\begin{thm}\label{main}
Let $X$ be a proper smooth geometrically integral $k$-variety and $L_i(1\leqslant i\leqslant n)$ be
finite extensions of $k(X)$ of degree $d_i.$ Suppose that $gcd(d_i|1\leqslant i\leqslant n)=1.$
If the sequence $(E)$ is exact for each $l_i$-variety $^{L_i}Y$ then it is exact
for $X.$
\end{thm}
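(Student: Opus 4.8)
The plan is to deduce Theorem \ref{main} from Proposition \ref{res-cores} by a standard restriction–corestriction (transfer) argument, exactly as one proves that $p$-torsion phenomena can be checked after passing to an extension of degree prime to $p$. Fix an element $(z_v)_v \in \prod_{v\in\Omega_k}\textup{CH}'_0(X_v)^\chapeau$ lying in the kernel of the pairing with $\textup{Br}(X)$; we must show it comes from $\textup{CH}_0(X)^\chapeau$. For each $i$, apply the proposition with $L = L_i$ and $Y = {}^{L_i}Y$: the middle horizontal map in the bottom two rows is $(E)$ for ${}^{L_i}Y$, which is exact by hypothesis, and $\textup{res}_{\tiny\mbox{loc}}(z_v)_v$ lies in the kernel of the corresponding pairing because the right-hand square commutes and $\textup{res}_{\tiny\mbox{Br}}$ carries the pairing for $X$ to the pairing for ${}^{L_i}Y$. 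Hence $\textup{res}_{\tiny\mbox{loc}}(z_v)_v$ is the image of some global class in $\textup{CH}_0({}^{L_i}Y)^\chapeau$; applying $\textup{cores}_{\tiny\mbox{gl}}$ and using commutativity of the left squares shows that $d_i\cdot(z_v)_v = \textup{cores}_{\tiny\mbox{loc}}\circ\textup{res}_{\tiny\mbox{loc}}(z_v)_v$ lies in the image of $\textup{CH}_0(X)^\chapeau\to\prod_v\textup{CH}'_0(X_v)^\chapeau$.

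Next I would use the coprimality of the $d_i$: choose integers $a_i\in\Z$ with $\sum_i a_i d_i = 1$, so that $(z_v)_v = \sum_i a_i\bigl(d_i\cdot(z_v)_v\bigr)$ is a $\Z$-linear combination of elements each lying in the image of $\textup{CH}_0(X)^\chapeau$. Since that image is a subgroup, $(z_v)_v$ itself lies in it, which is exactly the exactness of $(E)$ for $X$ at the middle term. One should double-check that the pairing with $\textup{Br}(X)$ is compatible with the $\Z$-module structure in the obvious way and that the completion functor $M\mapsto M^\chapeau$ is additive — both are routine since $\varprojlim_n M/nM$ is an exact-in-the-relevant-sense construction and the pairing is bilinear.

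The one genuine subtlety — and the step I expect to be the main obstacle — is bookkeeping around the definition of ${}^{L_i}Y$ and the fact that it lives over $l_i$, the algebraic closure of $k$ in $L_i$, not over $k$ itself. I would need to make sure that the restriction map $\textup{res}_{\tiny\mbox{loc}}$ in Proposition \ref{res-cores} correctly accounts for the places of $l_i$ above each place of $k$ (a place $v$ of $k$ splits into several $w\mid v$, and the local restriction is the product of the individual $k_v\hookrightarrow (l_i)_w$ maps), and that the composite $\textup{cores}_{\tiny\mbox{loc}}\circ\textup{res}_{\tiny\mbox{loc}}$ really is multiplication by $d_i=[L_i:k(X)]$ and not by $[L_i:l_i(X)]$ or some other index — but this is precisely the content of the last sentence of Proposition \ref{res-cores}, so I may invoke it directly. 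Modulo that, the argument is a two-line diagram chase plus the Bézout combination, and no further input is needed.
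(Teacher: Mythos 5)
Your proposal is correct and follows essentially the same route as the paper: restrict the local family to each $^{L_i}Y$, lift it there using the assumed exactness of $(E)$, corestrict back so that $d_i\cdot(z_v)_v$ lies in the image of $\textup{CH}_0(X)^\chapeau$ (via the last assertion of Proposition \ref{res-cores}), and conclude by a B\'ezout combination of the $d_i$. The paper merely phrases the last step by exhibiting the explicit global class $\alpha\,\textup{cores}_{\tiny\mbox{gl}}(z_1)+\beta\,\textup{cores}_{\tiny\mbox{gl}}(z_2)$ after reducing to $n=2$, which is the same argument as your "image is a subgroup" formulation.
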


\begin{proof}
Without any difficulty, we can reduce to the case where $n=2.$ By hypothesis, there exists
integers $\alpha$ and $\beta$ such that $\alpha d_1+\beta d_2=1.$
Let $\{z_v\}_v$ be an element of $\prod_{v\in\Omega_k}\textup{CH}'_0(X_v)^\chapeau\mbox{ }$
sending to $0$ in $\textup{Hom}(\textup{Br}(X),\mathbb{Q}/\mathbb{Z}).$ Applying the restriction homomorphism,
its image in $\prod_{w\in\Omega_{l_i}}\textup{CH}'_0(^{L_i}Y_w)^\chapeau\mbox{ }$ comes from an element $z_i$
of $\textup{CH}_0(^{L_i}Y)^\chapeau\mbox{ }$ by exactness of $(E)$ for $^{L_i}Y.$ By Proposition \ref{res-cores}, the element
$\textup{cores}_{\tiny\mbox{gl}}(z_i)\in \textup{CH}_0(X)^\chapeau\mbox{ }$ maps to $d_i\{z_v\}_v\in\prod_{v\in\Omega_k}\textup{CH}'_0(X_v)^\chapeau\mbox{ }.$
Hence the element
$z=\alpha \textup{cores}_{\tiny\mbox{gl}}(z_1)+\beta \textup{cores}_{\tiny\mbox{gl}}(z_2)\in \textup{CH}_0(X)^\chapeau\mbox{ }$ maps to
$\{z_v\}_v\in\prod_{v\in\Omega_k}\textup{CH}'_0(X_v)^\chapeau\mbox{ }.$
The exactness of $(E)$ is thus verified for $X.$
\end{proof}

\begin{proof}[Proof of the Proposition \ref{res-cores}]
The proof is formal.
The morphisms $^LY\to X_l\to X$ correspond to field extensions $L/l(X)/k(X).$
It suffices to verify the statement for the extensions $L/l(X)$ and $l(X)/k(X).$

Consider the $l$-morphism of geometrically integral $l$-varieties $^LY\to X_l$ corresponding to
$L/l(X).$ The restriction and corestriction maps are well defined between the Chow groups of
zero-cycles and between the Brauer groups, moreover their compositions are the
multiplication by $[L:l(X)]$ map. These induce the desired
homomorphisms between the modified Chow groups and we pass to the quotient by $n$ and to the projective
limit $\varprojlim_n,$ which give the desired commutative diagram for the finite morphism $^LY\to X_l.$

Consider the morphism $X_l\to X$ corresponding to $l(X)/k(X).$
There exists also restriction and corestriction homomorphisms between global (resp. local) Chow groups
satisfying a similar commutative diagram to the one in the statement,
moreover their composition
$\textup{CH}_0(X)\to \textup{CH}_0(X_l)\to \textup{CH}_0(X)$ (resp. $\textup{CH}_0(X_v)\to\prod_{w|v}\textup{CH}_0(X_{l,w})\to \textup{CH}_0(X_v)$)
is the multiplication by $[l:k]=[l(X):k(X)]$ map.
It suffices to verify that these homomorphisms factorize
through the modified Chow groups satisfying the following commutative diagram, and moreover $\textup{cores}'_v\circ \textup{res}'_v$
is the multiplication by $[l:k],$ for each archimedean place $v\in\Omega_k.$
\SelectTips{eu}{12}$$\xymatrix@C=20pt @R=14pt{
\textup{CH}_0(X_v)\ar[r]\ar[d]^{\textup{res}_v}&\textup{CH}'_0(X_v)\ar[d]^{\textup{res}'_v}\\
\prod_{w|v}\textup{CH}_0(X_{l,w})\ar[r]\ar[d]^{\textup{cores}_v}&\prod_{w|v}\textup{CH}'_0(X_{l,w})\ar[d]^{\textup{cores}'_v}\\
\textup{CH}_0(X_v)\ar[r]&\textup{CH}'_0(X_v)
}$$
If $v$ is a complex place, it is clear since the modified Chow groups are defined to be $0.$
Hereafter suppose that $v$ is a real place of $k$ and there are $r$ real places $w_i(1\leqslant i\leqslant r)$ of $l$
above $v$ and $s$ complex places $w_i(r+1\leqslant i\leqslant r+s)$ above $v,$ then $r+2s=d=[l:k].$
The lower square is commutative since it is induced by taking cokernels of the following commutative diagram.
\SelectTips{eu}{12}$$\xymatrix@C=40pt @R=14pt{
\prod_{i=1}^{r+s}\textup{CH}_0(X_{l,w_i}\times\bar{l}_{w_i})\ar[d]^{\textup{cores}_v}\ar[r]^{N_{\bar{l}_{w_i}|l_{w_i}}}&\prod_{i=1}^{r+s}\textup{CH}_0(X_{l,w_i})\ar[d]^{\textup{cores}_v}\\
\textup{CH}_0(X_v\times\bar{k}_v)\ar[r]^{N_{\bar{k}_v|k_v}}&\textup{CH}_0(X_v)
}$$
By grouping the real and complex places, we rewrite the upper diagram as follows
\SelectTips{eu}{12}$$\xymatrix@C=20pt @R=14pt{
\textup{CH}_0(X_v)\ar[r]\ar[d]^{\textup{res}_v}&\textup{CH}'_0(X_v)\ar[d]^{\textup{res}'_v}\\
\prod_{i=1}^r\textup{CH}_0(X_{l,w_i})\times\prod_{i=r+1}^{r+s}\textup{CH}_0(X_{l,w_i})\ar[r]&\prod_{i=1}^r\textup{CH}'_0(X_{l,w_i})\times\prod_{i=r+1}^{r+s}0.\\
}$$
We define the homomorphism $\textup{res}'_v$ to be the identity map for the first $r$ components and to be $0$ for the last $s$ components,
the diagram is thus commutative. The composition $\textup{cores}'_v\circ \textup{res}'_v$ is thus the multiplication by $r.$
For any real places $v,$ we have $\textup{CH}'_0(X_v)\simeq(\mathbb{Z}/2\mathbb{Z})^m$
where $m$ is the number of connected components (real topology)
of $X_v(\mathbb{R}),$ \emph{cf.} \cite[Prop 3.1]{CTIschebeck}.
As $r\equiv d\pmod{2},$ the composition $\textup{cores}'_v\circ \textup{res}'_v$ is the multiplication
by $d,$ which completes the proof.
\end{proof}

\section{Applications to normic bundles over the projective space}\label{new families}

In this section, we apply the theorems in \S \ref{the sequence} to establish the exactness of $(E)$
for several families of normic bundles over the projective space.

Let $K/k$ be a finite extension of number fields of degree $m.$ Let $P(t_1,\ldots,t_n)\in k(t_1,\ldots,t_n)$
be a rational function, its poles are situated outside an open subset $U$ of $\mathbb{A}^n.$
Consider an affine variety defined in $\mathbb{A}^{m}\times U$ by the equation
$$N_{K/k}(\textbf{x})=P(t_1,\ldots,t_n).$$
Birationally, it can be viewed as a normic bundle over $\mathbb{P}^n$ via the parameters $t_1,\ldots,t_n.$
One finds in \cite{B-HB} a list of recent results on the arithmetic of rational points on such varieties.
While for zero-cycles, we are going to establish the exactness of $(E)$ for proper smooth
models of these normic bundles.

\subsection{Examples without restriction on the degeneracy loci}
First of all, we obtain some results for certain extensions $K,$
no restriction on $P(t_i,\ldots,t_n)$ is made. Geometrically speaking, we allow arbitrary degeneracy loci of
the normic bundles.
Results in this subsection are all deduced from Theorem \ref{main}.

\begin{ex}\label{3}
Let $P(t_1,\ldots,t_n)\in k(t_1,\ldots,t_n)$ be a rational function.
Let $X$ be a proper smooth variety birationally equivalent to the variety defined by the
following normic equation
$$N_{K/k}(\mathbf{x})=P(t_1,\ldots,t_n),$$ where $K$ is a prime degree $p$ extension of $k.$
Then the sequence $(E)$ is exact for $X.$
\end{ex}

\begin{proof}
The argument dates back to Wei \cite[Thm. 4.3]{Wei}, where the
Hasse principle for zero-cycles of degree $1$ on such varieties was considered.

Firstly, as the left hand side splits into linear factors over $K,$ the variety $X_K$ is $K$-rational and hence
the sequence $(E)$ is exact for $X_K.$

Secondly, consider the Galois closure $M$ of $K$ over $k.$ The Galois group $G=Gal(M/k)$ is then a subgroup of the
symmetric group $S_p.$ Let $H$ be the $p$-Sylow subgroup of $G,$ it is cyclic of order $p.$ Let $L$ be its fixed field,
then the extension $L/k$ is Galois of degree prime to $p.$ The variety $X_L$ is birationally a fibration over $\mathbb{P}^n$
defined by $N_{M/L}(\mathbf{x})=P(t_1,\ldots,t_n),$ with $M/L$ a finite cyclic extension.
According to the following Lemma \ref{abelian-split}, codimension $1$ fibers of this fibration are abelian-split.
The sequence $(E)$ is then exact for $X_L,$ \emph{cf.} \cite[Th\'eor\`eme principal]{Liang3} and its
corrigendum where the codimension $1$ condition is explained.

Theorem \ref{main} applied to the field extensions $K(X)$ and $L(X)$ gives the exactness of $(E)$ for $X.$
\end{proof}

\begin{lem}\label{abelian-split}
Let $P(t_1,\ldots,t_n)\in k(t_1,\ldots,t_n)$ be a rational function.
Let $X$ be a smooth compactification of the smooth locus of the fibration over $\mathbb{P}^n$ defined by the affine equation
$$N_{K/k}(\mathbf{x})=P(t_1,\ldots,t_n),$$ where $K$ is a finite Abelian extension of $k.$

Then for each point $\theta\in\mathbb{P}^n$ of codimension $1,$ the fiber $X_\theta$ is
abelian-split, \emph{i.e.} the fiber $X_\theta$ has an irreducible component $Y$ of
multiplicity $1$ such that the algebraic closure of the residual field $k(\theta)$ in
the function field $k(\theta)(Y)$ is a finite Abelian extension of $k(\theta).$
\end{lem}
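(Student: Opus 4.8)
The assertion is local at $\theta$, so set $R=\mathcal{O}_{\mathbb{P}^n,\theta}$, a discrete valuation ring with uniformizer $\pi$, fraction field $F=k(t_1,\ldots,t_n)$ and residue field $\kappa=k(\theta)$, and put $r=v_\theta(P)\in\Z$. The one purely algebraic ingredient is the following: since $K/k$ is abelian, the \'etale $\kappa$-algebra $K\otimes_k\kappa$ decomposes as a finite product of fields $\prod_j\lambda_j$ in which each $\lambda_j/\kappa$ is \emph{finite abelian}. Indeed, write $K=k(\zeta)$ with minimal polynomial $f$ over $k$; then $f=\prod_jf_j$ over $\kappa$ with the $f_j$ distinct irreducible, $\lambda_j=\kappa[x]/(f_j)$, and all roots of $f$ lie in $K$ because $K/k$, being abelian and generated by $\zeta$, is the splitting field of $f$. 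If $\zeta_j$ is a root of $f_j$ and $\zeta_j'$ another, then $\zeta_j'=\sigma(\zeta_j)$ for some $\sigma\in\mathrm{Gal}(K/k)$, so $k(\zeta_j')=\sigma(k(\zeta_j))=k(\zeta_j)$ since $k(\zeta_j)/k$ is Galois; hence $\kappa(\zeta_j')=\kappa(\zeta_j)=\lambda_j$, which shows $\lambda_j/\kappa$ is normal, and its Galois group embeds by restriction to $k(\zeta_j)$ into the abelian group $\mathrm{Gal}(k(\zeta_j)/k)$.

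Consider first the case $r=0$. Then $P$ is a unit at $\theta$ with nonzero image $\bar P\in\kappa^\times$, so the fibre of the affine normic variety over the generic point of the divisor $\{\theta\}$ is the $\kappa$-variety $N_{K/k}(\mathbf{x})=\bar P$. By Euler's relation for the norm form this affine variety is smooth over $\kappa$, and it is a torsor under the norm-one torus $\mathrm{R}^1_{(K\otimes_k\kappa)/\kappa}\mathbb{G}_m$, a connected group; hence it is geometrically integral. In particular it lies in the smooth locus of the fibration, so its closure in $X$ is a multiplicity $1$ component $Y$ of $X_\theta$ with residual field $\kappa$ itself, and $X_\theta$ is abelian-split, the extension $\kappa/\kappa$ being trivially abelian.

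Now let $r\neq 0$; we treat $r>0$, the case $r<0$ (a pole of $P$) being symmetric, with the relevant components lying partly over the boundary added in the compactification. Modulo $\pi$ the affine equation degenerates to $N_{(K\otimes_k\kappa)/\kappa}(\mathbf{x})=0$, which is reduced but reducible and not contained in the smooth locus, so to read off the components of $X_\theta$ one has to resolve; equivalently, one computes the special fibre of a regular proper model, over the Henselisation $R^h$, of the generic fibre $X_F$. After the finite abelian base change $\kappa\subseteq\kappa'$, where $\kappa'$ is the compositum of all the $\lambda_j$, the algebra $K\otimes_k\kappa$ splits, the norm form becomes a product of $m=[K:k]$ linear forms, and the model becomes the split toric variety $x_1\cdots x_m=\pi^r u$ with $u\in R^\times$, whose regular models and special fibres are described by an explicit subdivision of the relevant cone, each torus orbit contributing a component of multiplicity $1$. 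The group $\mathrm{Gal}(\kappa'/\kappa)$ is abelian and permutes these components through its action on the $m$ split factors; a Galois orbit of multiplicity $1$ components therefore descends to a multiplicity $1$ component $Y$ of $X_\theta$ whose residual field has, as algebraic closure of $\kappa$, the fixed field of the stabilizer of one such factor, namely one of the $\lambda_j$ (or $\kappa$ itself). By the first paragraph this is abelian over $\kappa=k(\theta)$, so $X_\theta$ is again abelian-split.

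The technical heart is this last computation: one must produce a regular (equivalently smooth) proper model of the normic fibration over $R$, control the multiplicities of the components of its special fibre, and check by Galois descent that at least one multiplicity $1$ component survives with residual field as asserted, accounting also for the components lying over the boundary of the compactification. Such computations for normic fibrations over a discrete valuation ring are exactly the ones underlying the works cited above (Wei, and the earlier papers of the author); from them one needs only the single qualitative output that some multiplicity $1$ component of $X_\theta$ has residual field abelian over $k(\theta)$, and for this the decisive input is the elementary remark of the first paragraph that $K\otimes_k k(\theta)$ is a product of abelian extensions of $k(\theta)$.
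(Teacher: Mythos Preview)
Your proof has a real gap in the case $r\neq 0$. You sketch an explicit toric resolution after base change to a splitting field $\kappa'$, followed by Galois descent of a multiplicity~$1$ component, but you carry out neither step; the last paragraph openly defers the ``technical heart'' to unspecified computations in the cited literature. A reader cannot verify from your text that a multiplicity~$1$ component with the required residual field exists, nor that the components ``lying over the boundary'' behave as claimed.

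The paper's argument (following Wittenberg) sidesteps all of this and is uniform in $r$. Pass to the completion $\widehat{\mathcal{O}}_{\mathbb{P}^n,\theta}\simeq l[[t]]$ with $l=k(\theta)$ (Cohen's theorem), and let $V$ be any regular proper $l[[t]]$-model whose generic fibre contains the affine normic variety. Set $l'=K\cdot l$; since $K/k$ is abelian and hence Galois, $l'/l$ is abelian. Over $l'$ the norm form splits into linear factors, so $V_\eta$ has an $l'((t))$-rational point, and by properness this extends to a section $\mathrm{Spec}\,l'[[t]]\to V'=V\times_{l[[t]]}l'[[t]]$. The intersection number of this section with the special fibre $V_s\times_l l'$ is~$1$ by the projection formula, so the section meets $V_s\times_l l'$ in a smooth $l'$-point. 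That point lies on a multiplicity~$1$ component of $V_s$, which is therefore split by the abelian extension $l'/l$.

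Your first paragraph (that $K\otimes_k\kappa$ is a product of abelian extensions of $\kappa$) is correct and is morally the same input as ``$K\cdot l/l$ is abelian''; your $r=0$ case is also fine. What you are missing is that the case split and the explicit resolution are unnecessary: the valuative criterion of properness hands you the multiplicity~$1$ component directly.
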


\begin{proof}
We follow the argument of a similar result in \cite[pages 117-118]{Wittenberg}.
By Cohen's theorem, the local ring of $\mathbb{P}^n$ at $\theta$ is isomorphic to
$k(\theta)[[t]].$ By restricting the fibration to this local ring, it suffices to prove the statement:
\begin{itemize}
\item[]
If $l$ is an extension of $k$ and $V$ a regular proper $l[[t]]$-scheme of which
the generic fiber $V_\eta$ has an open subscheme defined by the equation
$$N_{K/k}(\mathbf{x})=P(t)$$ in variable $\mathbf{x}$ with coefficient in $l((t)),$ then
the special fiber $V_s$ is abelian-split.
\end{itemize}
Let us denote by $l'$ the compositum $K\cdot l,$ it is a finite Abelian extension of $l.$
Note that the left hand side of the normic equation splits into linear factors over $K,$
whence the generic fiber $V_\eta$ possesses a $l'((t))$-rational point. Since $V$ is proper, we find
a $l'[[t]]$-section of $V'=V\times_{Spec(l[[t]])}Spec(l'[[t]])\to Spec(l'[[t]]).$
The intersection multiplicity of this section with the special fiber $V_s\times_ll'$ of $V'$ equals to
$1$ by the projection formula. We obtain a smooth $l'$-rational point of $V_s,$ and hence
$V_s$ is split by the Abelian extension $l'/l.$
\end{proof}

The forthcoming Examples \ref{4}, \ref{5}, and \ref{6} are also geometrically rational varieties,
we study the local-global principle of zero-cycles, but the similar question for rational points is still far from reaching.

\begin{ex}\label{4}
Let $P(t_1,\ldots,t_n)\in k(t_1,\ldots,t_n)$ be a rational function.
Let $m>1$ be an integer such that $gcd(m,\varphi(m))=1,$ where
$\varphi(\cdot)$ is the Euler function. Let $a\in k^*$ be an element such that $a\notin k^{*q}$ for all integer $q$ dividing $m.$
Then for all proper smooth varieties $X$ birationally equivalent to the variety defined by the equation
$$N_{k(\sqrt[m]{a})/k}(\mathbf{x})=P(t_1,\ldots,t_n),$$
the sequence $(E)$ is exact.
\end{ex}

\begin{proof}
Firstly, the variety $X$ is rational over $K=k(\sqrt[m]{a}),$ hence the sequence $(E)$ is exact for $X_K.$
Under the assumption in the statement, the polynomial $X^m-a$ is irreducible over $k,$ we have $[K:k]=m.$

Secondly, let $\zeta_m$ be a primitive $m$-th root of unity. Consider the cyclotomic extension
$L=k(\zeta_m)$ of $k,$ it is of degree $\varphi(m).$ The variety $X_L$ is defined by the equation
$$N_{L(\sqrt[m]{a})/L}(\mathbf{x})=P(t_1,\ldots,t_n).$$ It is birationally equivalent to a fibration
over $\mathbb{P}^n,$ of which the codimension $1$ fibers are abelian-split by Lemma \ref{abelian-split}.
As the extension $L(\sqrt[m]{a})/L$ is cyclic, the smooth closed fibers of the fibration satisfy weak approximation
for rational points.
Whence the sequence $(E)$ is exact for $X_L,$ \emph{cf.} \cite[Th\'eor\`eme principal]{Liang3} and its
corrigendum where the codimension $1$ condition is explained.

As $m$ and $\varphi(m)$ is supposed coprime, we apply Theorem \ref{main} to the field extensions
$K(X)$ and $L(X)$ of $k(X),$ and we deduce the exactness of $(E)$ for $X.$
\end{proof}

\begin{rem}
The condition that $m$ and $\varphi(m)$ are coprime implies that $m$ is square-free.
There are infinitely many integers $m$ satisfying the condition in the statement, for example
we can take $m=3p$ where $p$ is a prime number congruent to $2$ modulo $3.$
\end{rem}

\begin{ex}\label{5}
Let $P(t_1,\ldots,t_n)\in k(t_1,\ldots,t_n)$ be a rational function.
Let $K_i(i=1,\ldots,m)$ be a field extension of $k$ of prime degree $p_i.$ Suppose that the $p_i$'s are distinct.
Denote by $L$ the compositum of the $K_i$'s.
Then for all proper smooth varieties $X$ birationally equivalent to the variety defined by the equation
$$N_{L/k}(\mathbf{x})=P(t_1,\ldots,t_n),$$
the sequence $(E)$ is exact.
\end{ex}

\begin{proof}
It suffices to prove the case where $m=2.$ According to the assumption, the compositum $L$ is isomorphic
to $K_1\otimes_kK_2.$ We write $K_2=k[T]/(f(T))$ with $f(T)\in k[T]$ an irreducible polynomial.
Then $K_2\otimes_kK_2\simeq K_2[T]/(f_1(T))\times\ldots\times K_2/(f_s(T))$ as $K_2$-algebra, where $f_j(T)\in K_2[T]$ are
irreducible polynomials such that $\prod_{j=1}^sf_j(T)=f(T),$ moreover we may assume that
$f_1(T)$ is of degree $1.$ Denote by $K_2^j$ the field extension $K_2[T]/(f_j(T))$ of $K_2,$ in particular
$K_2^1=K_2.$
The variety $X_{K_2}$ is birationally equivalent to a variety defined by the equation
$$N_{L\otimes_kK_2/K_2}(\mathbf{x})=P(t_1,\ldots,t_n).$$
The last equation rewrites as follows
$$N_{K_1\otimes_kK_2/K_2}(\mathbf{x_1})\cdot\prod_{j=2}^s N_{K_1\otimes_kK_2^j/K_2}(\mathbf{x_j})=P(t_1,\ldots,t_n),$$
birational equivalently
$$N_{L/K_2}(\mathbf{x_1})=\frac{P(t_1,\ldots,t_n)}{\prod_{j=2}^s N_{K_1\otimes_kK_2^j/K_2}(\mathbf{x_j})}.$$
Note that $L/K_2$ is a finite extension of prime degree $p_1,$ Example \ref{3} implies that
the sequence $(E)$ is exact for $X_{K_2}.$

Similar argument shows that $(E)$ is also exact for $X_{K_1},$ and Theorem \ref{main}
proves the exactness of $(E)$ for $X.$
\end{proof}

\begin{rem}
The same argument reduces the exactness of $(E)$ for varieties defined by
$N_{K/k}=P(t_1,\ldots,t_n)$ with $K/k$ an Abelian extension to the cases where $K/k$ is a $p$-extension.
\end{rem}

\begin{rem}
If $L/k$ is a Galois extension of degree $pq$ a product of two distinct prime numbers, then
$L$ is of the form stated in Example \ref{5} by Sylow's theorem.

One can also generate more examples in this manner: let $L/k$ be a Galois extension of degree
$p_1p_2p_3$ where $p_1>p_2>p_3$ are prime numbers. Suppose that $p_1\nmid p_2p_3-1,$
that $p_2\nmid p_3-1,$ and that $p_2\nmid p_1p_3-1.$ Under these assumptions, Sylow's theorem shows that
$Gal(L/k)$ has a normal subgroup of order $p_1$ and a normal subgroup of order $p_2.$ We obtain as fixed fields
Galois extensions $L_1/k$ and $L_2/k$ of degree $p_2p_3$ and $p_1p_3.$ By applying Sylow's theorem once again,
we get subextensions $K_1,$ $K_2,$ and $K_3$ of $L/k$ of degree $p_1,$ $p_2,$ and $p_3.$ Then the field $L$ has to
be the compositum of $K_1,$ $K_2,$ and $K_3.$
\end{rem}

In order to construct more examples from known cases, we suppose that $L_i(i=1,2)$ is a function field
of a proper smooth $k$-variety for which $(E)$ is exact.
The field $L_1$ (resp. $L_2$) is a finite extension of a certain rational function field $K_1=k(T_1,\ldots,T_m)$
(resp. $K_2=k(S_1,\ldots,S_n)$) of degree $d_1$ (resp. $d_2$). If $d_1$ and $d_2$ are coprime,
then $(E)$ is exact for proper smooth models of varieties defined by
$$N_{L_1/k(T_1,\ldots,T_m)}(\mathbf{x})=N_{L_2/k(S_1,\ldots,S_n)}(\mathbf{y})\cdot P(t_1,\ldots,t_n)$$
where $P(t_1,\ldots,t_n)\in k(t_1,\ldots,t_n).$
The exactness of $(E)$ still holds if we replace the extension $L_1/k(T_1,\ldots,T_m)$
by a finite extension $L_1/k$ (satisfying the coprime degree condition).

Alternatively, let $L$ be the compositum of
$L_1(S_1\ldots,S_n)$ and $L_2(T_1,\ldots,T_m)$ over $k(T_1,\ldots,T_m,S_1\ldots,S_n).$
We also get the exactness of $(E)$ for proper smooth models of varieties defined by
$$N_{L/k(T_1,\ldots,T_m,S_1\ldots,S_n)}(\mathbf{x})=P(t_1,\ldots,t_n).$$

These can be summarized as the following example.

\begin{ex}\label{6}
Let $L_i(i=1,2)$ be the function field of one of the following $k$-varieties
\begin{enumerate}
  \item[(a)] a proper smooth variety for which $(E)$ is exact,
  \item[(b)] a smooth projective curve such that its Jacobian has finite Tate\textendash Shafarevich group,
  \item[(c$_1$)] a homogeneous space of a connected linear algebraic group with connected stabilizer,
  \item[(c$_2$)] a homogeneous space of a connected semisimple simply connected linear algebraic group with
                Abelian stabilizer.
\end{enumerate}
The field $L_1$ (resp. $L_2$) is a finite extension of a certain rational function field $K_1=k(T_1,\ldots,T_m)$
(resp. $K_2=k(S_1,\ldots,S_n)$) of degree $d_1$ (resp. $d_2$). This extension corresponds to a dominant
morphism of proper smooth varieties $Y_1\to\mathbb{P}^m$ (resp. $Y_2\to\mathbb{P}^n$). Assume further that $d_1$ and $d_2$ are coprime.

Let $X$ be a proper smooth $k$-variety. Suppose that
\begin{enumerate}
\item the variety $X$ is birationally equivalent to
a fibration $X_1\to\mathbb{P}^m,$ whose pull-back via $Y_1\to\mathbb{P}^m$ has $L_1$-rational
generic fiber,
\item the variety $X$ is birationally equivalent to
a fibration over $X_2\to\mathbb{P}^n,$ whose pull-back via $Y_2\to\mathbb{P}^n$ has $L_2$-rational
generic fiber.
\end{enumerate}
Then the sequence $(E)$ is exact for $X.$
\end{ex}

\begin{proof}
First of all, according to \cite[Prop. 3.3]{CT99HP0-cyc} and \cite[Corollary]{Liang4}, (b), (c$_1$), and (c$_2$)
are particular cases of (a).

Since the fibration $X_1\times_{\mathbb{P}^m}Y_1\to Y_1$ has $L_1$-rational generic fiber, it is birationally
equivalent to $\mathbb{P}^{m_0}\times Y_1$ for some integer $m_0.$
The exactness of $(E)$ for $\mathbb{P}^{m_0}\times Y_1$ is deduced from the exactness for $Y_1.$
Therefore $(E)$ is exact for proper smooth varieties with function field $k(X_1\times_{\mathbb{P}^m}Y_1).$
Similarly, $(E)$ is exact for proper smooth varieties with function field $k(X_2\times_{\mathbb{P}^n}Y_2).$
Theorem \ref{main} applied to $k(X_1\times_{\mathbb{P}^m}Y_1)$ and $k(X_2\times_{\mathbb{P}^n}Y_2),$
which are extensions of coprime degrees $d_1$ and $d_2$ over $k(X),$
gives the exactness of $(E)$ for $X.$
\end{proof}

\subsection{Examples deduced from the arithmetic of rational points}
In the last subsection, most of the extensions $K/k$ are related to Abelian extensions.
For more general $K,$ we need the help of results on the arithmetic of rational points.
The base of the nomic bundles has to be the projective line, and we have to make some restriction on $P(t).$
Examples in this subsection are deduced from Theorem \ref{stronger version}.

The first family of varieties was studied recently by Derenthal\textendash Smeets\textendash Wei \cite{D-S-W}
\cite{D-S-W2}.

\begin{ex}\label{1}
Let $Q(t)\in k[t]$ be a quadratic irreducible polynomial. Let $K$ be a degree $4$ extension of $k$ such that
$Q(t)$ splits in $K.$ Then for all proper smooth varieties $X$ birationally equivalent
to the variety defined by the equation $$N_{K/k}(\mathbf{x})=Q(t),$$
the sequence $(E)$ is exact;
and moreover
$$\textup{A}_0(X)^\chapeau\mbox{ }\To\prod_{v\in\Omega_k}\textup{A}_0(X_v)^\chapeau\mbox{ }$$ is surjective.
\end{ex}

\begin{proof}
For any finite extension $k'/k$ linearly disjoint from the field $k[t]/(Q(t))$ over $k,$
the polynomial $Q(t)$ is still irreducible over $k'$ and splits over $K\cdot k'.$
The Brauer\textendash Manin obstruction is the only obstruction to weak approximation for
rational points on $X_{k'}$ \cite[Thm. 1]{D-S-W}. The exactness of $(E)$ is deduced by Theorem \ref{stronger version},
the exactness of $(E_0)$ thus follows after \cite[Rem. 1.1]{Wittenberg}.
Moreover under the hypothesis, we have $\textup{Br}(X)/\textup{Br}(k)=0$ \cite[Thm. 1]{D-S-W2},
which confirms the surjectivity
of the global-local homomorphism on the degree $0$ part.
\end{proof}

The second family of varieties was firstly studied by Heath-Brown\textendash Skorobogatov \cite{HBS}, then by
Colliot-Th\'el\`ene\textendash Harari\textendash Skorobogatov \cite{CTHS}, and recently
by Swarbrick-Jones \cite{Swarbrick-Jones}.

\begin{ex}\label{2}
Let $m,$ $n$ be positive integers and $c\in k^*$ be a constant.
Then for all proper smooth varieties $X$ birationally equivalent
to the variety defined by the equation $\textup N_{K/k}(\mathbf{x})=ct^m(1-t)^n,$ the sequence $(E)$ is exact.
\end{ex}

\begin{proof}
The statement follows directly from \cite[Thm. 1]{Swarbrick-Jones} and Theorem \ref{stronger version}.
\end{proof}

\bigskip


\bibliographystyle{alpha}
\bibliography{mybib1}
\end{document}